%
\documentclass[runningheads]{llncs}
\usepackage{graphicx}
%

\usepackage{amsmath} 
\usepackage{amssymb}  
\usepackage{enumerate}
\usepackage{color}

\newcommand{\R}{\ensuremath{\mathbb{R}}}

\newcommand{\D}{\ensuremath{\mathcal{D}}}
\newcommand{\Es}{\ensuremath{\mathbb{S}}}

\begin{document}
\mainmatter              
\title{Variational problems on Riemannian manifolds with constrained accelerations}
\titlerunning{Variational problems with constrained accelerations}  
%
\author{Alexandre Anahory Simoes\inst{1} \and Leonardo Colombo\inst{2}
}
\authorrunning{Alexandre A. Simoes et al.} 
%
\tocauthor{Alexandre Anahory Simoes, and Leonardo Colombo}
\institute{Instituto de Ciencias Matem\'aticas (CSIC-UAM-UCM-UC3M), Calle Nicol\'as Cabrera 13-15, Cantoblanco, 28049, Madrid, Spain \\
\email{alexandre.anahory@icmat.es} \and
Centre for Automation and Robotics
(CSIC-UPM), Ctra. M300 Campo Real, Km 0,200, Arganda
del Rey - 28500 Madrid, Spain. \\
\email{leonardo.colombo@car.upm-csic.es},
}

\maketitle              

\begin{abstract}
We introduce variational problems on Riemannian manifolds with constrained acceleration and derive necessary conditions for normal extremals in the constrained variational problem. The problem consists on minimizing a higher-order energy functional, among a set of admissible curves defined by a constraint on the covariant acceleration. In addition, we use this framework to address the elastic splines problem with obstacle avoidance in the presence of this type of contraints.

\keywords{optimal control, affine connection control systems, higher-order variational problems, Riemannian manifolds, higher-order constraints.}
\end{abstract}
\section{Introduction}
Higher-order Variational problems on Riemannian manidfolds have been very well studied in the literature in the last three decades (see for instance \cite{BCC2019,CSLC2001,CL95}). This study comes motivated by the theory of geodesics, presented, for instance, in Milnor \cite{Milnor}, showing a deep and rich example of the close relationship between first order variational problems and differential geometry. Motivated by this connection and applications to dynamic interpolation on manifolds \cite{Noa:89}, Crouch and Silva Leite \cite{CL95} started the development of an interesting geometric theory of generalized cubic  polynomials on a Riemannian manifold $M$, in particular on compact connected Lie groups endowed with a left-invariant metric. Further extensions appears in the context of  obstacle avoindance problems \cite{BCC2019,BCNH}, regression problems on Lie groups \cite{kuper}, collision avoidance problems \cite{CollAvoid}, and sub-Riemannian geometry, with connections with non-holonomic mechanics and control, studied by Bloch and Crouch \cite{BlCr,blochcrouch}. These sub-Riemannian problems are determined by additional constraints on a non-integrable distribution on $M$.

In this work we aim to study generalized cubic polynomails subjected to a specific type of higher-order constraints: linear constraints on the covariant accelerations. This variational problem is closely related to the optimal control problems of underactuated mechanical systems which are particular types of affine connection control systems, studied, for instance, in \cite{CA2014}.

Problems with nonholonomic constraints have been previously introduced \cite{BCC2019,CL95} where the authors deduce necessary conditions for the existence of normal as well as abnormal extremals. Moreover, in \cite{BCC2019}, the authors studied the problem of dynamic interpolation for obstacle avoidance under the presence of constraints on the velocities. Our contribution here is also the introduction of a variational collision avoidance with second-order nonholonomic constraints which are closely related to affine connection control systems.

The remainder of the paper is organized as follows. In Section \ref{background}, we review the subject of constraints on velocities in collision avoidance, in Section \ref{lin:acceleration:sec}, we give necessary condition for normal extremals of collision avoidance with constraints on the covariant accelerations. In Section \ref{Obs:avoidance:sec}, we discuss obstacle avoidance in this setting. Examples and simulation results are also presented in Section \ref{Obs:avoidance:sec}.

\section{Variational problems with linear constraints on the velocities}\label{background}

Given a $n$-dimensional Riemannian manifold $Q$ equipped with the metric $\langle\cdot,\cdot \rangle$, we denote by $\Omega$ be the set of all (possibly piecewise) smooth curves $c:[0,T]\rightarrow Q$ satisfying
\begin{equation}\label{boundary:conditions}
	c(0)=q_{0}, \ c(T)=q_{T}, \ \dot{c}(0)=v_{0}, \ \dot{c}(T)=v_{T},
\end{equation}
where $q_{0}, q_{T} \in Q$, $v_{0}\in T_{q_{0}}Q$ and $v_{T}\in T_{q_{T}}Q$. Note that the tangent space to $\Omega$ at a curve $c$ is the set of vector fields $X:[0,T]\rightarrow TQ$ along $c$ satisfying
\begin{equation*}
	X(0)=0, \ X(T)=0, \ \frac{DX}{dt}(0)=0, \ \frac{DX}{dt}(T)=0,
\end{equation*}
where $D/dt$ is the covariant derivative with respect to the Levi-Civita connection.

Suppose that we want to find minimizers of the functional
\begin{equation*}
\text{min} \int_{0}^{T} \frac{1}{2}\left\langle \frac{D^{2} c}{dt^{2}},\frac{D^{2} c}{dt^{2}} \right\rangle dt
\end{equation*}
among curves in $\Omega$ that also satisfy the following linear constraints on the velocities
\begin{equation}\label{nh:constraints}
\left\langle \mu^{a}(c(t)), \dot{c}(t) \right\rangle=0, \quad a=1, ..., n-k,
\end{equation}
where $\{\mu^{a}\}$ are $n-k$ 1-forms on $Q$. 

The span of $\D$ defines the annihilator space of a distribution $\D = \cup \D_{q}$, with fiber defined by
$\D_{q} = \{ v_{q}\in T_{q}Q \ | \ \langle \mu^{a}(q), v_{q} \rangle, \ a=1, ...,n-k \}$. The rank of the distribution is $k=\text{dim}(\D_{q})$. If $(q^{i})$ is a coordinate chart on $Q$, then the linear constraints are locally given by $\mu_{k}^{a}(q)\dot{q}^{k}=0$.

The necessary conditions satisfied by solutions of the preceding variational problem are classically obtained using the constrained Lagrangian approach, where one considers an augmented Lagrangian function and then applies the Lagrange multiplier's theorem to obtain constrained solutions. However, when we have constraints on velocities or higher-order constraints care must be taken because, by following this procedure, we obtain only \textit{normal extremals}, though \textit{abnormal extremals}, which do not satisfy the same necessary conditions, may also exist. In this paper, we will restrict ourselves to the study of normal extremals.

If the augmented Lagrangian $\mathcal{L}:T^{(2)}Q\times \R^{n-k}\rightarrow \R$ is given by
\begin{equation*}
\mathcal{L} = \frac{1}{2}\left\langle \frac{D^{2} c}{dt^{2}},\frac{D^{2} c}{dt^{2}} \right\rangle + \lambda_{a}\left\langle \mu^{a}(c(t)), \dot{c}(t) \right\rangle,
\end{equation*}
the corresponding second-order Euler-Lagrange equations, which are,
\begin{equation*}
\frac{d^{2}}{dt^{2}}\left(\frac{\partial \mathcal{L}}{\partial \ddot{q}}\right) - \frac{d}{dt}\left(\frac{\partial \mathcal{L}}{\partial \dot{q}}\right) + \frac{\partial \mathcal{L}}{\partial q} = 0,
\end{equation*}
might be given, an intrinsic form (i.e., independent of the choice of local coordinates). In order to do so, let $X_{a}\in \mathfrak{X}(Q)$ be the vector field defined by
\begin{equation}\label{metric:equivalent:vf}
\langle \mu^{a}, Y \rangle = \langle X_{a},Y \rangle, \quad \forall Y \in \mathfrak{X}(Q),
\end{equation}
i.e., $\flat(X_{a})=\mu^{a}$, then it is shown in \cite{CL95} that Euler-Lagrange equations are rewritten in the following form
\begin{equation*}
\frac{D^{4}c}{dt^{4}} + R\left(\frac{D^{2}c}{dt^{2}}, \dot{c}\right)\dot{c} = \dot{\lambda}_{a} X_{a} + \lambda_{a} S(\dot{c}),
\end{equation*}
where $S_{a}:TQ\rightarrow TQ$ is the fiberwise linear bundle operator defined by
\begin{equation}\label{second:form}
d\mu^{a}(X,Y)=\langle S_{a}(X),Y \rangle, \quad \forall X,Y \in \mathfrak{X}(Q).
\end{equation}

	

\begin{remark}A generalization of this problem was studied in \cite{BlCaCoCDC,BCC2019}, where the authors add a potential into the picture to study dynamical interpolation problems for obstacle avoidance and further used in \cite{ABBCC2018,CoGo20,CCCBB2020,CollAvoid,existence,goodman} to provide necessary and sufficient conditions of collision avoidance of multi-agent systems. There one studies curves in $\Omega$ satisfying the constraints \eqref{nh:constraints} and minimizing the functional
\begin{equation*}
\int_{0}^{T} \frac{1}{2}\left(\left\langle \frac{D^{2} c}{dt^{2}},\frac{D^{2} c}{dt^{2}} \right\rangle + \sigma \left\|\frac{dc}{dt} \right\|^{2} + V(c(t))\right) dt
\end{equation*}
with $V:Q\rightarrow \R$ a potential function and $\sigma\geqslant 0$. In this case, the necessary conditions for $c$ to be a normal extremal are given by the following equations
\begin{equation*}
\frac{D^{4}c}{dt^{4}} + R\left(\frac{D^{2}c}{dt^{2}}, \dot{c}\right)\dot{c} - \sigma  \frac{D^{2} c}{dt^{2}} + \frac{1}{2}\text{grad} V = \dot{\lambda}_{a} X_{a} + \lambda_{a} S(\dot{c}).
\end{equation*}
\end{remark}

\section{Variational problems with linear constraints on the covariant accelerations}\label{lin:acceleration:sec}

Given a $n$-dimensional Riemannian manifold $Q$ equipped with the metric $\langle\cdot,\cdot \rangle$, we want to find solutions of the minimal acceleration problem
\begin{equation}\label{Problem:2}
\text{min} \int_{0}^{T} \frac{1}{2}\left\langle \frac{D^{2} c}{dt^{2}},\frac{D^{2} c}{dt^{2}} \right\rangle dt
\end{equation}
but with the covariant accelerations are subjected to the following linear constraints
\begin{equation}\label{Cov:accel:constraints}
\left\langle \mu^{a}(c(t)), \frac{D^{2} c}{dt^{2}} \right\rangle=0, \quad a=1, ..., n-k,
\end{equation}
where $\{\mu^{a}\}$ are $n-k$ 1-forms on $Q$. 

The span of $\D$ defines the annihilator space of a distribution $\D = \cup \D_{q}$, with fiber defined by
$\D_{q} = \{ v_{q}\in T_{q}Q \ | \ \langle \mu^{a}(q), v_{q} \rangle, \ a=1, ...,n-k \}$. The rank of the distribution is $k=\text{dim}(\D_{q})$. If $(q^{i})$ is a coordinate chart on $Q$, then the linear constraints are locally given by
\begin{equation*}
\mu_{k}^{a}(q)(\ddot{q}^{k}+\Gamma_{ij}^{k}\dot{q}^{i}\dot{q}^{j})=0,
\end{equation*}
where $(\Gamma_{ij}^{k})$ are the Christoffel symbols of the Levi-Civita connection.

These kind of problems has appeared before in the control literature in a different form. Suppose that an orthonormal reference frame for $\D$ exists and denote it by $\{Y_{1},...,Y_{k}\}$. Then this constrained variational problem is equivalent to the following optimal control problem:
\begin{equation*}
\text{min} \int_{0}^{T} \frac{1}{2}\left\langle u,u \right\rangle dt
\end{equation*}
subjected to the dynamics
$\dot{c}=V, \, \frac{DV}{dt}=u^{i} Y_{i}, \, (u^{1},...,u^{k}) \in \R^{k}$.

Recall that the covariant acceleration along the curve $c$ is locally defined by
\begin{equation*}
\frac{DV}{dt}=\dot{V} + \Gamma_{ij}^{k} V^{i}\dot{c}^{j}\frac{\partial}{\partial q^{k}},
\end{equation*}
This type of control system is known as \textit{affine connection control system} (see, e.g., \cite{L2000,LM97,CA2014}). It is actually a special type of controlled simple mechanical systems with forces (see \cite{BL2004}) which have the following general form:
\begin{equation*}
\dot{c}=V, \quad \frac{DV}{dt}=\sharp(F(c,V,u)),
\end{equation*}
where $\sharp:T^{*}Q \rightarrow TQ$ is the musical isomorphism associated to the Riemannian metric and $F:TQ\times \R^{k}\rightarrow T^{*}Q$ is a force map, possibly depending on the controls. In the case of affine connection control systems, we have no potential or external forces, apart from those associated with the controls.

The relationship between affine connection control systems and the variational problem \eqref{Problem:2} without constraints was already discussed in \cite{BGK2016} for the case where the control system is fully actuated, meaning that $k=n$. However, in our case, the constrained variational problem is rather related with the \textit{underactuated} control system ($k<n$). 
The necessary conditions satisfied by normal extremals of the preceding problem are obtained from the augmented Lagrangian $\mathcal{L}:T^{(2)}Q\times \R^{n-k}\rightarrow \R$ given by
\begin{equation*}
\mathcal{L} = \frac{1}{2}\left\langle \frac{D^{2} c}{dt^{2}},\frac{D^{2} c}{dt^{2}} \right\rangle + \lambda_{a}\left\langle \mu^{a}(c(t)), \frac{D^{2} c}{dt^{2}} \right\rangle.
\end{equation*}

As before, we will write the corresponding Euler-Lagrange equations in a geometric form.

\begin{lemma}\label{useful:lemma}
	The fiberwise linear operator $S_{a}:TQ\rightarrow TQ$ defined by equation \eqref{second:form} satisfies
	\begin{equation}
		\langle S_{a}(X),Y \rangle = \langle \nabla_{X} X_{a}, Y \rangle - \langle \nabla_{Y}X_{a}, X \rangle,
	\end{equation}
	for all $X, Y \in \mathfrak{X}(Q)$.
\end{lemma}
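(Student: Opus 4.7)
The plan is to unfold the definition of $S_a$ via the intrinsic Koszul-type formula for the exterior derivative of a 1-form and then convert the resulting directional derivatives of scalar functions into covariant derivatives by exploiting the two defining properties of the Levi-Civita connection: metric compatibility and torsion-freeness.

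Concretely, I would start from the identity
\begin{equation*}
d\mu^{a}(X,Y) = X(\mu^{a}(Y)) - Y(\mu^{a}(X)) - \mu^{a}([X,Y]),
\end{equation*}
which holds for any 1-form on $Q$. Using the musical relation $\mu^{a}(Z) = \langle X_{a}, Z\rangle$ from \eqref{metric:equivalent:vf}, the right-hand side becomes
\begin{equation*}
X\langle X_{a}, Y\rangle - Y\langle X_{a}, X\rangle - \langle X_{a},[X,Y]\rangle.
\end{equation*}

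The next step is to apply metric compatibility of $\nabla$ to each of the first two terms, namely $X\langle X_{a},Y\rangle = \langle \nabla_{X} X_{a}, Y\rangle + \langle X_{a},\nabla_{X} Y\rangle$ and analogously for $Y\langle X_{a}, X\rangle$. Subtracting gives
\begin{equation*}
\langle \nabla_{X} X_{a}, Y\rangle - \langle \nabla_{Y} X_{a}, X\rangle + \langle X_{a},\nabla_{X} Y - \nabla_{Y} X\rangle.
\end{equation*}
At this point the torsion-free identity $\nabla_{X} Y - \nabla_{Y} X = [X,Y]$ cancels the bracket term inherited from the Koszul formula, leaving
\begin{equation*}
d\mu^{a}(X,Y) = \langle \nabla_{X} X_{a}, Y\rangle - \langle \nabla_{Y} X_{a}, X\rangle.
\end{equation*}
Comparing with \eqref{second:form} yields the claim.

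I do not expect a real obstacle: the argument is purely pointwise-tensorial and uses only the two characterizing properties of the Levi-Civita connection. The only mild care needed is to keep track of which slot of the metric each covariant derivative lands on when subtracting the two applications of compatibility, so that the asymmetry of $S_{a}$ on the right-hand side matches the antisymmetry of $d\mu^{a}$ on the left.
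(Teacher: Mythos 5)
Your proof is correct and follows essentially the same route as the paper: expand $d\mu^{a}(X,Y)$ via the standard formula for the exterior derivative of a 1-form, rewrite the directional derivatives using metric compatibility, and cancel the bracket term by torsion-freeness of the Levi-Civita connection. No gaps.
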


\begin{proof}
	Using the definition of $S_{a}$ in equation \eqref{second:form} and the definition of the differential on a 1-form we have that
	$\langle S_{a}(X),Y \rangle = X\langle X_{A},Y \rangle - Y \langle X_{a},X \rangle - \langle X_{a}, [X,Y] \rangle.$
	Using the fact that the Levi-Civita connection is torsionless and compatible with the metric we obtain
$\langle S_{a}(X),Y \rangle  = \langle \nabla_{X} X_{a}, Y \rangle - \langle \nabla_{Y}X_{a}, X \rangle + \langle X_{a}, \nabla_{X}Y-\nabla_{Y}X - [X,Y] \rangle	 = \langle \nabla_{X} X_{a}, Y \rangle - \langle \nabla_{Y}X_{a}, X \rangle$, as we desired to show.\hfill$\square$
\end{proof}

Before stating the main theorem in this section we recall the following well-known lemma from the literature.

\begin{lemma}\label{lemma:curvature}
	The variation $c:[-\varepsilon,\varepsilon] \times [0,T]\rightarrow Q$ satisfies
	$$\frac{D}{ds}\frac{D^{2}c}{dt^{2}} = \frac{D^{2}}{dt^{2}} \frac{\partial c}{\partial s} + R\left(\frac{\partial c}{\partial s},\frac{\partial c}{\partial t}\right)\frac{\partial c}{\partial t}.$$
\end{lemma}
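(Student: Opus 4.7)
The plan is to derive the identity from two standard facts about the Levi-Civita connection applied to a two-parameter map: the symmetry lemma and the coordinate expression of curvature along variations. Since $c$ is a smooth variation, the coordinate vector fields $\partial/\partial s$ and $\partial/\partial t$ on the parameter rectangle commute, so the bracket term in the definition of the Riemann curvature tensor will vanish.

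First I would invoke the symmetry lemma: because the Levi-Civita connection is torsion-free, one has
\begin{equation*}
\frac{D}{ds}\frac{\partial c}{\partial t} = \frac{D}{dt}\frac{\partial c}{\partial s}.
\end{equation*}
This is proved in local coordinates by writing both sides out with Christoffel symbols and using $\Gamma^k_{ij}=\Gamma^k_{ji}$ together with the equality of mixed partials of $c$.

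Next, I would apply the curvature identity for vector fields along a two-parameter map. For any vector field $W$ along $c(s,t)$,
\begin{equation*}
\frac{D}{ds}\frac{D}{dt} W - \frac{D}{dt}\frac{D}{ds} W = R\!\left(\frac{\partial c}{\partial s}, \frac{\partial c}{\partial t}\right) W,
\end{equation*}
which is the standard extension of the intrinsic definition of $R$ to pulled-back connections (the bracket term is absent since $[\partial_s,\partial_t]=0$). Taking $W = \partial c/\partial t$ yields
\begin{equation*}
\frac{D}{ds}\frac{D^{2}c}{dt^{2}} = \frac{D}{dt}\frac{D}{ds}\frac{\partial c}{\partial t} + R\!\left(\frac{\partial c}{\partial s},\frac{\partial c}{\partial t}\right)\frac{\partial c}{\partial t}.
\end{equation*}

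Finally, I would substitute the symmetry lemma inside the first term on the right-hand side to replace $\frac{D}{ds}\frac{\partial c}{\partial t}$ by $\frac{D}{dt}\frac{\partial c}{\partial s}$, giving $\frac{D^{2}}{dt^{2}}\frac{\partial c}{\partial s}$ and hence the claimed formula. No step is genuinely difficult; the only mild subtlety is being careful that both covariant derivatives $D/ds$ and $D/dt$ refer to the pullback of the Levi-Civita connection along $c:(-\varepsilon,\varepsilon)\times[0,T]\to Q$, so that the symmetry lemma and the curvature identity above are directly applicable.
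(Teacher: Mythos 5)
Your proof is correct: the symmetry lemma $\frac{D}{ds}\frac{\partial c}{\partial t}=\frac{D}{dt}\frac{\partial c}{\partial s}$ together with the curvature commutation identity for vector fields along a two-parameter map, applied to $W=\frac{\partial c}{\partial t}$, gives exactly the stated formula, and your remark about working with the pulled-back Levi-Civita connection (and the sign/ordering convention $\frac{D}{ds}\frac{D}{dt}W-\frac{D}{dt}\frac{D}{ds}W=R(\frac{\partial c}{\partial s},\frac{\partial c}{\partial t})W$) is the only point that needed care. The paper itself gives no proof of this lemma — it is recalled as a well-known result from the literature — and your argument is precisely the standard one found in the references (e.g.\ Milnor, Crouch--Silva Leite), so there is nothing to reconcile.
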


\begin{theorem}\label{main:theorem}
	If a trajectory $c:I\rightarrow Q$ is a normal extremal of the functional \eqref{Problem:2} satisfying the constraints given by \eqref{Cov:accel:constraints} then $c$ satisfies
	\begin{equation*}
		\begin{split}
			\frac{D^{4}c}{dt^{4}} + R\left(\frac{D^{2}c}{dt^{2}}, \dot{c}\right)\dot{c} & + \ddot{\lambda}_{a}X_{a} + 2\dot{\lambda}_{a}\frac{D X_{a}}{dt} + \lambda_{a}\frac{D^{2} X_{a}}{dt^{2}} \\
			& + \lambda_{a}\left(R(X_{a},\dot{c})\dot{c} + \nabla_{\frac{D^{2}c}{dt^{2}}} X_{a} - S_{a}\left( \frac{D^{2}c}{dt^{2}} \right)\right) = 0,
		\end{split}
	\end{equation*}
	where $R$ is the curvature of the Levi-Civita connection.
\end{theorem}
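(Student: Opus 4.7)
My plan is to derive the equation from the first variation of the augmented action $\int_0^T \mathcal{L}\, dt$, following the usual constrained variational approach. I take a smooth variation $c(s,t)$ of $c$ through admissible curves, with variational vector field $X=\partial c/\partial s|_{s=0}$ satisfying the tangent-space conditions $X(0)=X(T)=0$ and $DX/dt(0)=DX/dt(T)=0$. Since the multipliers $\lambda_a(t)$ can be treated as independent, the condition of being a normal extremal is that $\frac{d}{ds}\big|_{s=0}\int_0^T \mathcal{L}\, dt=0$ for every such $X$; the constraint equation then recovers $\langle\mu^a,D^2c/dt^2\rangle=0$ upon variation in $\lambda_a$.

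The computation splits in two. For the kinetic term, metric compatibility gives $\frac{d}{ds}\frac12\langle D^2c/dt^2,D^2c/dt^2\rangle=\langle \frac{D}{ds}\frac{D^2c}{dt^2},\frac{D^2c}{dt^2}\rangle$, and by Lemma~\ref{lemma:curvature} this equals $\langle D^2X/dt^2+R(X,\dot c)\dot c,D^2c/dt^2\rangle$. Two integrations by parts on the first piece (with boundary terms vanishing by the choice of $X$) yield $\langle X,D^4c/dt^4\rangle$, and the symmetry $\langle R(X,\dot c)\dot c,D^2c/dt^2\rangle=\langle R(D^2c/dt^2,\dot c)\dot c,X\rangle$ of the Riemann tensor accounts for the first curvature term in the stated equation. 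For the multiplier term, I write $\langle\mu^a,D^2c/dt^2\rangle=\langle X_a,D^2c/dt^2\rangle$ via~\eqref{metric:equivalent:vf}; differentiating in $s$ at $s=0$ and using metric compatibility and Lemma~\ref{lemma:curvature} gives
\begin{equation*}
\lambda_a\langle\nabla_X X_a,D^2c/dt^2\rangle+\lambda_a\langle X_a,D^2X/dt^2\rangle+\lambda_a\langle X_a,R(X,\dot c)\dot c\rangle.
\end{equation*}
Integrating the middle piece by parts twice produces $\langle \ddot\lambda_a X_a+2\dot\lambda_a DX_a/dt+\lambda_a D^2X_a/dt^2,X\rangle$ (where $DX_a/dt$ means $\nabla_{\dot c}X_a$), while the last piece is handled by the same Riemann tensor symmetry to give $\lambda_a\langle R(X_a,\dot c)\dot c,X\rangle$.

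The only nonroutine step, and the one where Lemma~\ref{useful:lemma} is essential, is the first piece $\lambda_a\langle\nabla_X X_a,D^2c/dt^2\rangle$: I need to pull $X$ into the second slot of the pairing so that the fundamental lemma can be applied. Setting $W=D^2c/dt^2$ in Lemma~\ref{useful:lemma} gives $\langle\nabla_X X_a,W\rangle=\langle S_a(X),W\rangle+\langle\nabla_W X_a,X\rangle$, and because $d\mu^a$ is a 2-form, the operator $S_a$ defined by~\eqref{second:form} is skew with respect to the metric, so $\langle S_a(X),W\rangle=-\langle S_a(W),X\rangle$. This turns the term into $\lambda_a\langle \nabla_{D^2c/dt^2}X_a-S_a(D^2c/dt^2),X\rangle$, accounting for the last bracketed contribution in the statement.

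Assembling all pieces, the first variation becomes $\int_0^T\langle E,X\rangle\,dt$, where $E$ is precisely the left-hand side of the equation in Theorem~\ref{main:theorem}. By the fundamental lemma of the calculus of variations applied to the admissible variations $X$, we conclude $E=0$ along $c$, which is the stated fourth-order equation. The main obstacle, as noted, is the geometric rewriting of the $\langle\nabla_X X_a,D^2c/dt^2\rangle$ term; everything else is a careful bookkeeping of covariant integration by parts and use of the curvature symmetry $\langle R(A,B)C,D\rangle=\langle R(C,D)A,B\rangle$.
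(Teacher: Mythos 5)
Your proposal is correct and follows essentially the same route as the paper's proof: the augmented functional, Lemma~\ref{lemma:curvature} plus two covariant integrations by parts for both the kinetic and multiplier terms, the curvature symmetry, and Lemma~\ref{useful:lemma} to convert the $\langle\nabla_{\delta c}X_a,\tfrac{D^2c}{dt^2}\rangle$ term. The only cosmetic difference is that you apply Lemma~\ref{useful:lemma} with the variation field in the first slot and then invoke skewness of $S_a$, whereas the paper plugs in $X=\tfrac{D^2c}{dt^2}$, $Y=\delta c$ directly; the two rearrangements are identical.
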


\begin{proof}
	Suppose $c$ is a minimum trajectory. Then, from Lagrange Multiplier's Theorem we have that $c$ must be a critical value of the functional
	\begin{equation*}
		\mathcal{J}(c, \lambda_{a})=\int_{0}^{T} \left(\frac{1}{2}\left\langle \frac{D^{2} c}{dt^{2}},\frac{D^{2} c}{dt^{2}} \right\rangle + \lambda_{a}\left\langle \mu^{a}(c(t)), \frac{D^{2} c}{dt^{2}} \right\rangle \right) \ dt,
	\end{equation*}
	which means that
$\displaystyle{\frac{d}{ds}\left. \mathcal{J}(c_{s}, \lambda_{a,s}) \right|_{s=0}=0}$ for all variations of the curve $(c,\lambda)$, i.e., for all maps $(c_{s}, \lambda_{a,s})$ satisfying $(c_{0}, \lambda_{a,0})=(c,\lambda)$.
	By computing the critical values of $\mathcal{J}$, we find that the first term gives
	$\displaystyle{\int_{0}^{T} \left\langle \frac{D^{4}c}{dt^{4}} + R\left(\frac{D^{2}c}{dt^{2}}, \dot{c}\right)\dot{c}, \delta c \right\rangle}$, 
	where $\displaystyle{\delta c := \left. \frac{\partial c_{s}}{\partial s} \right|_{s=0}}$	while the second term splits in two sub-terms. 
	
	The first one is
	$\displaystyle{\int_{0}^{T} \delta \lambda_{a} \left\langle \mu^{a}(c(t)), \frac{D^{2} c}{dt^{2}} \right\rangle \ dt}$ with
	$\displaystyle{\delta \lambda := \left. \frac{\partial \lambda_{a,s}}{\partial s} \right|_{s=0}}$	and the second is 
	$\displaystyle{\int_{0}^{T} \lambda_{a} \left.\frac{d}{ds}\right|_{s=0}\left\langle \mu^{a}(c_{s}(t)), \frac{D^{2} c_{s}}{dt^{2}} \right\rangle \ dt}$, and in order to deal with it let us consider again the vector fields $X_{a}\in \mathfrak{X}(Q)$ defined in equation \eqref{metric:equivalent:vf}, i.e., $\flat(X_{a})=\mu^{a}$. Then
	\begin{equation}\label{main:issue}
		\left.\frac{d}{ds}\right|_{s=0}\left\langle X_{a}(c_{s}(t)), \frac{D^{2} c_{s}}{dt^{2}} \right\rangle =  \left\langle \frac{D X_{a}}{ds}(c(t)), \frac{D^{2} c}{dt^{2}} \right\rangle + \left\langle X_{a}(c(t)), \frac{D}{ds}\frac{D^{2} c}{dt^{2}} \right\rangle.
	\end{equation}
	Using Lemma \ref{useful:lemma} with $X=\frac{D^{2}c}{dt^{2}}$ and $Y=\delta c$, we have that the first term in equation \eqref{main:issue} gives
	$$\left\langle \frac{D X_{a}}{ds}(c(t)), \frac{D^{2} c}{dt^{2}} \right\rangle = \left\langle \nabla_{\frac{D^{2}c}{dt^{2}}} X_{a}, \delta c \right\rangle - \left\langle S_{a}\left( \frac{D^{2}c}{dt^{2}} \right), \delta c \right\rangle.$$
	Observe that $\nabla_{\frac{D^{2}c}{dt^{2}}} X_{a}$ is well-defined and its value may be defined using an extension of $\frac{D^{2}c}{dt^{2}}$ to a vector field $Z$ satisfying $Z(c(t))=\frac{D^{2}c}{dt^{2}}$. Then,
	$\nabla_{\frac{D^{2}c}{dt^{2}}} X_{a} = \nabla_{Z} X_{a} |_{c(t)}$.
	
	To simplify the second term in equation \eqref{main:issue} we first use Lemma \ref{lemma:curvature}, obtaining
	$$\left\langle X_{a}(c(t)), \frac{D}{ds}\frac{D^{2} c}{dt^{2}} \right\rangle = \left\langle X_{a}(c(t)), \frac{D^{2}}{dt^{2}}\delta c + R(\delta c, \dot{c})\dot{c} \right\rangle.$$
	Using the symmetries from the curvature tensor the term involving the curvature reduces to
	$\left\langle R(X_{a}, \dot{c})\dot{c},\delta c \right\rangle$ while the other term is simplified using integration by parts. Indeed we get that
	\begin{equation*}
		\begin{split}
			\int_{0}^{T} \lambda_{a} \left\langle X_{a}, \frac{D^{2}}{dt^{2}}\delta c \right\rangle = -\int_{0}^{T} & \left(\dot{\lambda_{a}} \left\langle X_{a}, \frac{D}{dt}\delta c \right\rangle + \lambda_{a}\left\langle \frac{D X_{a}}{dt}, \frac{D}{dt}\delta c \right\rangle \right) dt \\
			& + \left[\lambda_{a}\left\langle X_{a}, \frac{D}{dt}\delta c \right\rangle \right]_{t=0}^{t=T}.
		\end{split}
	\end{equation*}
	Integrating by parts again we get on one hand
	\begin{equation*}
		\begin{split}
			\int_{0}^{T} \dot{\lambda_{a}} \left\langle X_{a}, \frac{D}{dt}\delta c \right\rangle = -\int_{0}^{T} & \left(\ddot{\lambda_{a}} \left\langle X_{a}, \delta c \right\rangle + \dot{\lambda}_{a}\left\langle \frac{D X_{a}}{dt}, \delta c \right\rangle \right) dt \\
			& + \left[\dot{\lambda}_{a}\left\langle X_{a}, \delta c \right\rangle \right]_{t=0}^{t=T},
		\end{split}
	\end{equation*}
	while on the other hand we get
	\begin{equation*}
		\begin{split}
			\int_{0}^{T} \lambda_{a}\left\langle \frac{D X_{a}}{dt}, \frac{D}{dt}\delta c \right\rangle = -\int_{0}^{T} & \left(\dot{\lambda_{a}} \left\langle \frac{D X_{a}}{dt}, \delta c \right\rangle + \lambda_{a}\left\langle \frac{D^{2} X_{a}}{dt^{2}}, \delta c \right\rangle \right) dt \\
			& + \left[\dot{\lambda}_{a}\left\langle \frac{D X_{a}}{dt}, \delta c \right\rangle \right]_{t=0}^{t=T}.
		\end{split}
	\end{equation*}
	Therefore, the curves $(c,\lambda)$ are a critical value of $\mathcal{J}$ if the integral
	\begin{equation*}
		\begin{split}
			\int_{0}^{T} & \left( \left\langle \frac{D^{4}c}{dt^{4}} + R\left(\frac{D^{2}c}{dt^{2}}, \dot{c}\right)\dot{c} + \ddot{\lambda}_{a}X_{a} + 2\dot{\lambda}_{a}\frac{D X_{a}}{dt} + \lambda_{a}\frac{D^{2} X_{a}}{dt^{2}} + R(X_{a},\dot{c})\dot{c}, \delta c \right\rangle \right. \\
			& + \left. \lambda_{a}\left\langle \nabla_{\frac{D^{2}c}{dt^{2}}} X_{a} - S_{a}\left( \frac{D^{2}c}{dt^{2}} \right), \delta c \right\rangle + \delta \lambda_{a} \left\langle X_{a}, \frac{D^{2} c}{dt^{2}} \right\rangle \right) dt
		\end{split}
	\end{equation*}
	vanishes, since all the boundary terms vanish. By applying the fundamental lemma of calculus of variations, we conclude the proof.\hfill$\square$
\end{proof}




\begin{example}
	Suppose we have a planar rigid body which is free to move on every direction of the plane. Its configuration space is $SE(2)\cong \R^{2} \times \Es^{1}$ parametrized by $(x,y,\theta)$. Its Riemannian metric is given by
	$g=m(dx \otimes dx + dy \otimes dy) + J d\theta \otimes d\theta$.
	Hence, the corresponding Levi-Civita connection has vanishing Christoffel symbols. However, this rigid body can only be actuated in some directions. In particular,  we have the following dynamic underactuated control system
	\begin{equation*}
			m \ddot{x}  = \cos \theta  \ u_{r}, \quad
			m \ddot{y}  = \sin \theta  \ u_{r}, \quad
			J \ddot{\theta}  = u_{\theta}.
	\end{equation*}
	
	Consider the optimal control problem consisting of the dynamical equations above and by the cost function given by $\displaystyle{\int_{0}^{T} \frac{1}{2}(u_{r}^{2} + u_{\theta}^{2}) dt}$.
	Notice that the control system implies the following constraints on the covariant accelerations $\displaystyle{
	\left\langle \mu, \frac{D^{2} c}{dt^{2}} \right\rangle = \sin\theta \ddot{x} - \cos \theta \ddot{y} = 0}$ with $\mu = \sin\theta d{x} - \cos \theta d{y}$. Thus, the optimal control problem is equivalent to the constrained variational problem
	\begin{equation*}
		\int_{0}^{T} \frac{1}{2}\left\langle \frac{D^{2} c}{dt^{2}}, \frac{D^{2} c}{dt^{2}} \right\rangle dt \quad \text{and} \quad  \left\langle \mu, \frac{D^{2} c}{dt^{2}} \right\rangle = 0.
	\end{equation*}
	Hence, necessary conditions for normal extremals are given by
	\begin{equation*}
		\begin{split}
				x^{(4)} + \frac{\ddot{\lambda}}{m}\sin\theta + 2  \frac{\dot{\lambda}}{m}\dot{\theta}\cos \theta + \frac{\lambda}{m}(\ddot{\theta}\cos\theta - \dot{\theta}^{2}\sin \theta ) & = 0 \\
				y^{(4)} - \frac{\ddot{\lambda}}{m}\cos\theta + 2  \frac{\dot{\lambda}}{m}\dot{\theta}\sin \theta + \frac{\lambda}{m}(\ddot{\theta}\sin\theta + \dot{\theta}^{2}\cos \theta ) & = 0 \\
				\theta^{(4)} + \frac{\lambda}{J}(\cos \theta \ddot{x} + \sin \theta \ddot{y}) & = 0,
		\end{split}
	\end{equation*}
	where we used the fact that
	$X = \frac{\sin \theta}{m}\frac{\partial}{\partial x}-\frac{\cos \theta}{m}\frac{\partial}{\partial y}$
	and also that
	$\nabla_{\frac{D^{2}c}{dt^{2}}} X = \ddot{\theta}\left( \frac{\cos \theta}{m}\frac{\partial}{\partial x} + \frac{\sin \theta}{m}\frac{\partial}{\partial y} \right)$
	and
	$$S\left(a\frac{\partial}{\partial x}+b\frac{\partial}{\partial y}+c\frac{\partial}{\partial z} \right) = \frac{c}{m} \cos \theta \frac{\partial}{\partial x} + \frac{c}{m} \sin \theta \frac{\partial}{\partial y} - \frac{a \cos \theta + b \sin \theta}{J} \frac{\partial}{\partial \theta}.$$
\end{example}

\section{Application to obstacle avoidance with constraints on the accelerations}\label{Obs:avoidance:sec}

Now consider the problem of finding a curve $c \in \Omega$ satisfying the constraints given by \eqref{Cov:accel:constraints} and, for $V:Q\rightarrow \R$ a potential function minimize the functional
\begin{equation*}
\int_{0}^{T} \frac{1}{2}\left(\left\langle \frac{D^{2} c}{dt^{2}},\frac{D^{2} c}{dt^{2}} \right\rangle + \sigma \left\|\frac{dc}{dt} \right\|^{2} + V(c(t))\right) dt
\end{equation*}

We will now deduce necessary conditions in order for a curve $c:[0,T]\rightarrow Q$ to be an extremal of the constrained variational problem above.

\begin{theorem}
	A necessary condition for a curve $c$ to be a normal extremal of the previous functional subjected to the constraints in \eqref{Cov:accel:constraints} is that it satisfies
	\begin{equation*}
		\begin{split}
			\frac{D^{4}c}{dt^{4}} + R\left(\frac{D^{2}c}{dt^{2}}, \dot{c}\right)\dot{c} & -\sigma \frac{D^{2}c}{dt^{2}} + \ddot{\lambda}_{a}X_{a} + 2\dot{\lambda}_{a}\frac{D X_{a}}{dt} + \lambda_{a}\frac{D^{2} X_{a}}{dt^{2}} \\
			& + \lambda_{a}\left(R(X_{a},\dot{c})\dot{c} + \nabla_{\frac{D^{2}c}{dt^{2}}} X_{a} - S_{a}\left( \frac{D^{2}c}{dt^{2}} \right)\right) + \frac{1}{2}\emph{grad } V = 0,
		\end{split}
	\end{equation*}
	where $V:Q\rightarrow \R$ is a potential function.
\end{theorem}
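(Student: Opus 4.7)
The plan is to piggy-back on the proof of Theorem~\ref{main:theorem}, since the augmented Lagrangian here differs from the one treated there only by the two extra terms $\tfrac{\sigma}{2}\|\dot c\|^{2}$ and $\tfrac{1}{2}V(c)$. By linearity of the first variation, I would compute the contribution of each new term to $\delta \mathcal{J}$ separately and then simply add those contributions to the Euler--Lagrange-type equation already derived. Concretely, I would consider the augmented Lagrangian
\[
\mathcal{L} = \tfrac{1}{2}\left\langle \tfrac{D^{2}c}{dt^{2}}, \tfrac{D^{2}c}{dt^{2}}\right\rangle + \tfrac{\sigma}{2}\|\dot c\|^{2} + \tfrac{1}{2}V(c) + \lambda_{a}\langle \mu^{a}(c), \tfrac{D^{2}c}{dt^{2}}\rangle,
\]
take a one-parameter variation $c_{s}$ of $c$ with variation field $\delta c = \partial c_{s}/\partial s|_{s=0}$ satisfying $\delta c(0) = \delta c(T) = 0$ and $D\delta c/dt(0) = D\delta c/dt(T) = 0$, and evaluate $\tfrac{d}{ds}\mathcal{J}(c_{s},\lambda_{a,s})|_{s=0}$ term by term.

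For the kinetic-like term, compatibility of the Levi-Civita connection with the metric gives
\[
\frac{d}{ds}\bigg|_{s=0}\tfrac{\sigma}{2}\langle \dot c_{s},\dot c_{s}\rangle = \sigma\left\langle \dot c, \tfrac{D}{dt}\delta c\right\rangle,
\]
and a single integration by parts, whose boundary terms vanish because $\delta c$ vanishes at the endpoints, produces the contribution $-\sigma\int_{0}^{T}\langle D^{2}c/dt^{2},\delta c\rangle\,dt$; this accounts for the new $-\sigma\,D^{2}c/dt^{2}$ term. For the potential, the definition of the gradient with respect to the metric yields
\[
\frac{d}{ds}\bigg|_{s=0}\tfrac{1}{2}V(c_{s}(t)) = \tfrac{1}{2}\langle \text{grad}\, V, \delta c\rangle,
\]
which needs no integration by parts and directly contributes the $\tfrac{1}{2}\text{grad}\, V$ term.

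The variation of the pure acceleration functional together with the constraint term is treated exactly as in Theorem~\ref{main:theorem}, producing the terms involving $D^{4}c/dt^{4}$, the curvature corrections $R(D^{2}c/dt^{2},\dot c)\dot c$ and $R(X_{a},\dot c)\dot c$, the three multiplier time-derivatives $\ddot\lambda_{a}X_{a}+2\dot\lambda_{a}DX_{a}/dt+\lambda_{a}D^{2}X_{a}/dt^{2}$, and the operator $\nabla_{D^{2}c/dt^{2}}X_{a} - S_{a}(D^{2}c/dt^{2})$ coming from Lemma~\ref{useful:lemma} and Lemma~\ref{lemma:curvature}. Summing the three pieces, all boundary contributions vanish by the endpoint conditions defining $T_{c}\Omega$, and the fundamental lemma of the calculus of variations yields the stated equation. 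There is no genuine obstacle: the only subtlety is the consistent vanishing of boundary terms, which is immediate from the definition of admissible variations, so the argument reduces to a linear superposition on top of Theorem~\ref{main:theorem}.
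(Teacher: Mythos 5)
Your proposal is correct and follows essentially the same route as the paper: it reuses the proof of Theorem \ref{main:theorem} verbatim for the acceleration and constraint terms, and adds the first variations of $\tfrac{\sigma}{2}\|\dot c\|^{2}$ (one integration by parts, boundary terms vanishing) and $\tfrac{1}{2}V(c)$ (via the definition of $\mathrm{grad}\,V$). No gaps to report.
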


\begin{proof}
	The result follows directly from the proof of Theorem \ref{main:theorem}, to which we must add the variation of the terms containing the potential function $V$ and the velocities. Using the notation in that proof, on one hand we have that
	$$\frac{1}{2}\int_{0}^{T}\left. \frac{d}{ds} \right|_{s=0} V(c_{s}(t)) \ dt = \frac{1}{2}\int_{0}^{T} dV(c(t))(\delta c) \ dt.$$
	By definition, $\text{grad } V\in \mathfrak{X}(Q)$ is the unique vector field satisfying
	$dV(q)(Z)= \langle \text{grad } V (q), Z \rangle$, $\forall \ Z \in \mathfrak{X}(Q)$, 
	from where the term containing $\text{grad } V$ follows.
	
	On the other hand, we have
	$\displaystyle{\frac{1}{2}\int_{0}^{T}\left. \frac{d}{ds} \right|_{s=0} \sigma \left\|\frac{dc}{dt} \right\|^{2} \ dt = \int_{0}^{T} \sigma \left\langle \dot{c},\frac{D^{2} c}{dsdt} \right\rangle}$. 
	Integrating by parts we get
	$$\frac{1}{2}\int_{0}^{T}\left. \frac{d}{ds} \right|_{s=0} \sigma \left\|\frac{dc}{dt} \right\|^{2} \ dt = - \int_{0}^{T} \sigma \left\langle \frac{D^{2} c}{dt^2},\delta c \right\rangle + \left[\langle \dot{c}, \delta c \rangle \right]_{t=0}^{t=T},$$
	from where the result follows, since the boundary terms vanish.\hfill$\square$
\end{proof}

\begin{example}
	Suppose we have again the planar rigid body from the last example. Suppose there is a circular shaped obstacle with centre located at $(0,0)$ and radius $r$ in the $xy$-plane. Consider the artificial obstacle avoidance potential given by
	$\displaystyle{V(x,y,\theta)=\frac{\tau}{x^{2}+y^{2}-r^{2}}}$, penalizing collision with the obstacle, where $\tau>0$. Normal extremals must satisfy the equations
	\begin{equation*}
		\begin{split}
			x^{(4)} - \sigma \ddot{x} + \frac{\ddot{\lambda}}{m}\sin\theta + 2  \frac{\dot{\lambda}}{m}\dot{\theta}\cos \theta + \frac{\lambda}{m}(\ddot{\theta}\cos\theta - \dot{\theta}^{2}\sin \theta ) - \frac{\tau x}{(x^{2}+y^{2}-r^{2})^{2}}& = 0, \\
			y^{(4)} - \sigma \ddot{y} - \frac{\ddot{\lambda}}{m}\cos\theta + 2  \frac{\dot{\lambda}}{m}\dot{\theta}\sin \theta + \frac{\lambda}{m}(\ddot{\theta}\sin\theta + \dot{\theta}^{2}\cos \theta ) - \frac{\tau y}{(x^{2}+y^{2}-r^{2})^{2}}& = 0, \\
			\theta^{(4)} - \sigma \ddot{\theta} + \frac{\lambda}{J}(\cos \theta \ddot{x} + \sin \theta \ddot{y}) & = 0.
	\end{split}
	\end{equation*}
	
	In Figure \ref{fig:my_label} we show a simulation of our method. A shooting method and a fpurth order Runge-Kutta method with $h=0.1$, $N=55$, $T=Nh$, are used to simulate the boundary value problem. The  curve represents a normal extrema avoiding a static obstacle with $r=0.1$. The parameters for the trajectory used are $m=1$, $J=2$, $\sigma= 0.1$, $\tau = 1$. Boundary condition are given by: $q(0)=(-1,-1,0)$, $q(T)=(1,1,0)$, $v(0)=(0.1,0,0.2)$, $v(T)=(0,0,0)$,
	
	\begin{figure}[htb!]
	    \centering
	    \includegraphics[scale=0.5]{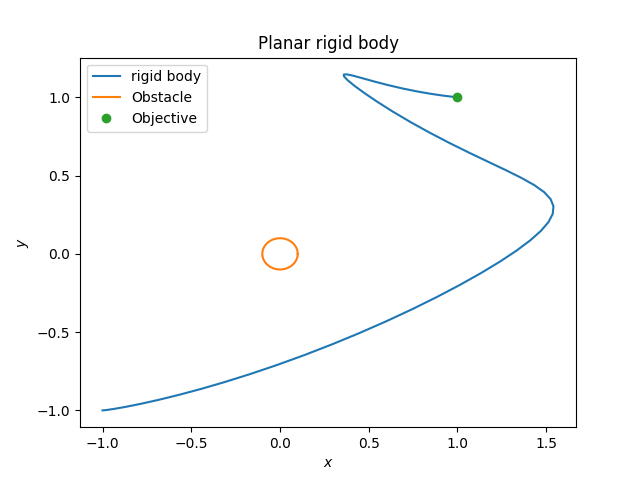}
	    \caption{A normal extrema avoiding a static obstacle.}
	    \label{fig:my_label}
	\end{figure}
\end{example}


\section{Conclusions and future work}

We have studied variational problems on Riemannian manifolds with constrained acceleration, derived necessary conditions for normal extrema, and we have also stablihed the close relation of our problem with affine connection control systems.

Interesting questions may arrise from the study of variational problem associated with the optimal control of affine connection control system. For instance, it is known that a dynamic control nonholonomic system, might be expressed as an affine connection control system using the nonholonomic connection $\nabla^{nh}$ (see \cite{LM97}) giving equations of the form:
$\nabla^{nh}_{\dot{c}}\dot{c} = u^{i}Y_{i}$, with $\{Y_{i}\}$ spanning $\D$. It would be interesting to study the corresponding variational problem.


\section*{Acknowledgements}

The authors acknowledge financial support from the Spanish Ministry of Science and Innovation, under grants PID2019-106715GB-C21, MTM2016-76702-P.
%
%
%

\end{document}